\theoremstyle{plain}
\newtheorem{definition}{Definition}
\newtheorem{theorem}[definition]{Theorem}
\newtheorem{problem}[definition]{Problem}
\newcommand{\lef}{\mathcal{L}}
\newcommand{\pre}{\mathcal{P}}
\newcommand{\n}{\mathcal{N}}
\newcommand{\ri}{\mathcal{R}}
\begin{document}

\title{Mis\`ere Hackenbush is NP-Hard}
\author{Fraser Stewart}
\date{}
\maketitle{}

\begin{abstract}
Hackenbush is a two player game, played on a graph with
coloured edges where players take it in turns to remove edges of
their own colour.  It has been shown that under normal play rules
Red-Blue Hackenbush (all edges are coloured either red or blue) is
NP-hard.  We will show that Red-Blue Hackenbush is in P, but that Red-Blue-Green Hackenbush is NP-Hard, when played under mis\`ere rules.
\end{abstract}

\section{Introduction}

Hackenbush is a game that is played on a graph, with coloured edges,
that is connected to a ground defined arbitrarily before the game
begins. The rules of Red-Blue Hackenbush are as follows:

\begin{enumerate}

\item{Players take it in turn to remove edges.}
\item{Left may only remove bLue edges and Right may only remove Red edges.}
\item{Any edges not connected to the ground are also removed.}
\item{Under normal play the last player to move wins, under mis\`ere play the last player to move loses.}

\end{enumerate}

An example a of Red-Blue Hackenbush positions is given in Figures~\ref{hex1}.  The vertices that are labelled with a ``$g$'' are the vertices that are connected to the ground.

\begin{figure}[htb]
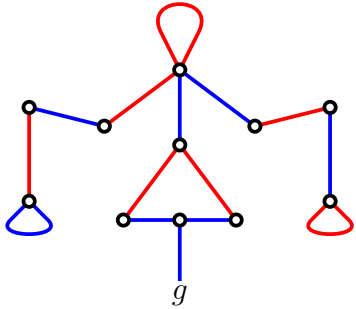

\begin{center}
\begin{graph}(4,4)

\graphnodecolour{1}\graphlinewidth{0.05}
\roundnode{a}(2,0)\roundnode{b}(2,1)\roundnode{c}(1.25,1)\roundnode{d}(2.75,1)
\roundnode{e}(2,2)\roundnode{f}(2,3)\roundnode{g}(1,2.25)\roundnode{h}(0,2.5)
\roundnode{i}(0,1.25)\roundnode{j}(3,2.25)\roundnode{k}(4,2.5)\roundnode{l}(4,1.25)

\edge{a}{b}[\graphlinecolour(0,0,1)]\edge{b}{c}[\graphlinecolour(0,0,1)]
 \edge{b}{d}[\graphlinecolour(0,0,1)] \edge{c}{e}[\graphlinecolour(1,0,0)]
\edge{d}{e}[\graphlinecolour(1,0,0)]
\edge{e}{f}[\graphlinecolour(0,0,1)]\edge{f}{g}[\graphlinecolour(1,0,0)]
\edge{g}{h}[\graphlinecolour(0,0,1)]\edge{h}{i}[\graphlinecolour(1,0,0)]
\edge{f}{j}[\graphlinecolour(0,0,1)]\edge{j}{k}[\graphlinecolour(1,0,0)] \edge{k}{l}[\graphlinecolour(0,0,1)]
\loopedge{f}(0.25,0.5)(-0.25,0.5)[\graphlinecolour(1,0,0)]
\loopedge{i}(0.25,-0.25)(-0.25,-0.25)[\graphlinecolour(0,0,1)]
\loopedge{l}(0.25,-0.25)(-0.25,-0.25)[\graphlinecolour(1,0,0)]

\nodetext{a}{$g$}

\end{graph}
\end{center}
\caption{An example of a Red-Blue Hackenbush position}\label{hex1}
\end{figure}

In Winning Ways \cite{WW}, the authors used different variants of
Hackenbush to illustrate all parts of the theory for normal play
games.  For this reason it is worth studying when considering
mis\`ere play games.  However it has been shown that determining the 
outcome of a general position of Red-Blue Hackenbush under normal play
is NP-hard (for an explanation of NP-hardness see \cite{GJ}).

In this paper we will also be using the following definition for the four possible outcome classes of a normal or mis\`ere play game;

\begin{definition}\cite{LIP}  We define the following;
\begin{itemize}
\item{$\lef=\{G|\hbox{Left wins playing first or second in } G\}$.}
\item{$\ri=\{G|\hbox{Right wins playing first or second in } G\}$.}
\item{$\pre=\{G|\hbox{The second player to move wins in } G\}$.}
\item{$\n=\{G|\hbox{The first player to move wins in } G\}$.}
\end{itemize}
\end{definition}

For further information about combinatorial game theory see \cite{LIP}, \cite{WW} or \cite{ONAG}.

\section{Red-Blue and Red-Blue-Green Mis\`ere Hackenbush}

You might expect that when we consider Red-Blue Hackenbush under
mis\`ere rules that it is still hard to determine the winner.
However it is actually very easy to determine the winner of a
Red-Blue Hackenbush position under mis\`ere rules and it can be done
in polynomial time as shown in Theorem \ref{hackenbush}.

\begin{definition}
A ``grounded'' edge, is an edge that is connected directly to the
ground.
\end{definition}

\begin{theorem}\label{hackenbush}  Let $G$ be a game of Red-Blue
mis\`ere Hackenbush, and let $B$ and $R$ be the number of grounded
blue and red edges respectively, then the outcome of $G$ can be
determined by the following formula:

$$G\in\begin{cases}
&\lef\text{, if $B>R$}\\
&\ri\text{, if $R>B$}\\
&\n\text{, if $B=R$}
\end{cases}$$

\end{theorem}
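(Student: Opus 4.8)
The plan is to induct on the number of edges of $G$ and to prove the stronger statement that the outcome class of any position depends only on the pair $(B,R)$. The linchpin is a structural observation about the cascade rule: a grounded edge is attached directly to the ground, so deleting any \emph{other} edge can never disconnect it, and hence a grounded edge leaves the graph only when its owner deliberately removes that very edge. In particular the cascade rule only ever discards non-grounded edges. It follows that a single move changes $(B,R)$ in a very limited way: a Left move either decreases $B$ by exactly one (Left takes a grounded blue edge) or leaves $B$ fixed (Left takes a non-grounded blue edge), and in neither case does it change $R$; Right's moves behave symmetrically. Thus $B$ and $R$ are cascade-proof private resources, each lowered only one unit at a time and only by its owner.

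Granting this, I would argue that the whole position behaves, for the purpose of deciding the winner, exactly like the \emph{depletion race} in which Left owns $B$ tokens, Right owns $R$ tokens, and each player deletes one of their own tokens per turn. Under misère a player who has no token to delete on their turn wins, since the opponent was then compelled to make the last deletion; so the loser is whoever is eventually forced to make the final move, and the comparison of $B$ and $R$ decides everything. The player favoured by the count can keep spending grounded edges so as to leave the opponent making the last move, while in the balanced case the decisive extra tempo belongs to whoever is on move. Evaluating this race yields precisely the three alternatives of the statement: a strict inequality produces a player who wins regardless of who starts, and equality produces a first-player win.

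To turn this into a proof I would check, in the inductive step, that every move from a position with counts $(B,R)$ leads to a position whose (inductively known) outcome is consistent with the race value at $(B,R)$. The moves on grounded edges are the easy cases, since they shift $(B,R)$ exactly as the race prescribes. The delicate cases — and what I expect to be the main obstacle — are moves on \emph{non-grounded} edges, where one deletion can trigger a large cascade that erases many non-grounded edges of both colours at once. Here I must show that such a move never changes the outcome class: although it alters the parity of the number of edges remaining, it leaves $B$ and $R$ untouched, and the favoured player can always reply so as to neutralise the attempted change of tempo. Making this tempo bookkeeping rigorous, especially in the balanced case $B=R$ where the winner is decided purely by who is on move, is the heart of the argument; once it is in place, the three cases follow by induction.
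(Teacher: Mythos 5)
Your structural lemma is correct and is the right starting point: the cascade rule can never delete a grounded edge, so $B$ and $R$ are private resources that only their owners can decrement, one unit per move, and an induction on the number of edges built on this observation does go through. The genuine gap is at the one point where you stay vague: \emph{which} player the depletion race favours. You correctly note that under mis\`ere rules ``a player who has no token to delete on their turn wins'' --- but that means the race is won by the player with \emph{fewer} tokens, since being obliged to keep spending moves is a liability. Your own correct description of the race therefore contradicts the statement you claim it yields: carried out honestly, the argument proves $G\in\lef$ when $R>B$ and $G\in\ri$ when $B>R$, the transpose of the printed theorem. No amount of care in the ``tempo bookkeeping'' can repair this, because the statement as printed is false. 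Take a single grounded blue edge: $B=1>R=0$, yet Left playing first is forced to make the only (hence last) move and loses, while Left playing second loses immediately because Right, having no move, wins at once; this position lies in $\ri$, not $\lef$. The phrases ``the player favoured by the count'' and ``yields precisely the three alternatives of the statement'' conceal exactly this sign error, which sits at the crux of the whole proof.

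You should also know that the paper itself contains the same discrepancy: its proof has Left win by repeatedly removing his own grounded blue edges precisely in the case $R\geq B$, and concludes that Left wins both first and second only when $R>B$; that is, the paper's proof establishes the transposed version, and the case display in the theorem simply has $\lef$ and $\ri$ interchanged. So your plan --- essentially a more careful, inductive rendering of the paper's strategy argument --- would succeed against the corrected statement once the orientation of the race is fixed. With that fix, your worry about non-grounded moves largely dissolves: under the race values no position ever lies in $\pre$, so only the counts matter, never the parity of the remaining edges. In the inductive step, a non-grounded move leaves $(B,R)$ unchanged and hence, by induction, hands the opponent a position of the same outcome class with the opponent to move; checking the three cases shows this always favours the player the counts favour. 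For instance, when $R>B$ every Right move reaches $(B,R')$ with $R'\geq B$, which Left, now to move, wins by induction; and when $B=R$ the first player's unique type of winning move is to take one of their own grounded edges, reaching a position in which the opponent holds strictly more grounded edges.
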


\noindent \begin{proof}

Let there be $R$ grounded Red edges and $B$ grounded Blue edges, and
consider the case where $R\geq B$.  Consider Left moving first. His
winning move will be to remove one of his own grounded edges.
Regardless of what Right does in response to this, Left can keep
removing his grounded edges.

Once Left has removed all of these edges, there will be at least one
grounded Right edge, and Left wins regardless if he is to move first
or second in this situation.

Left can only win moving second if $R>B$, since Right taking one of
his grounded Red edges will be moving to the situation $R\geq B$,
and it will be Left's turn to move.  If Right chooses not to take
one of his grounded Red edges , then again Left takes one of his
grounded Blue edges, and again wins.

The situation $B\leq R$ follows by symmetry.

\end{proof}

So this means that all we have to do to find the outcome class for a
game of Red-Blue mis\`ere Hackenbush, we simply count the number of
grounded red and blue edges and the difference will tell us the
outcome class.  This can clearly be done in polynomial time, which
means that Red-Blue mis\`ere Hackenbush is neither NP-complete or
NP-hard.

\subsection{Red-Blue-Green Mis\`ere Hackenbush}

The rules of Red-Blue-Green Hackenbush are identical to the rules of Red-Blue Hackenbush, but for one additional rule.  That is green edges, which may be removed by both players, in the diagrams that follow green edges will be represented by thick edges.  It turns out to be a far more complicated game.

\noindent PROBLEM: \textbf{RED-BLUE-GREEN MIS\`ERE HACKENBUSH}

\vskip2pt

\noindent INSTANCE: A position of Red-Blue-Green Mis\`ere Hackenbush
$G$. \vskip2pt

\noindent QUESTION: What is the outcome of $G$?

\begin{theorem}\label{mrbh}
Red-Blue-Green Mis\`ere Hackenbush is NP-hard.
\end{theorem}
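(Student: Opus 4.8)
The plan is to prove NP-hardness by a polynomial-time reduction from a known NP-complete problem, most naturally a satisfiability or graph problem whose structure can be encoded by Hackenbush gadgets. Since green edges can be taken by either player, they behave like neutral ``switches'' whose removal changes the parity and connectivity of the position, and the misère winning condition (last player loses) makes the outcome sensitive to the exact number of moves available. This suggests encoding a boolean formula --- I would target \textbf{3-SAT} or, perhaps more cleanly, a problem like \textbf{Node-Kernel} or an explicit parity/matching problem --- so that the existence of a satisfying assignment corresponds exactly to one player being able to force the opponent to make the last move.

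First I would design \emph{variable gadgets}: small grounded subgraphs, built mainly from green edges, in which a first choice of which green edge to remove corresponds to setting a variable true or false. The key is that removing one grounded green edge should disconnect (and hence delete) a whole branch, so that a single move commits to a truth value and removes the ``other'' literal's support. Next I would build \emph{clause gadgets} that are attached to the ground only through the literal-branches of the variable gadgets, so that a clause subgraph survives (stays grounded) precisely when at least one of its literals has been set true. I would then add a \emph{parity/counting component} --- extra grounded red and blue edges in the spirit of Theorem~\ref{hackenbush} --- tuned so that the total number of remaining moves, and hence who is forced to move last, flips exactly according to whether every clause gadget has been satisfied.

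In the order of carrying this out I would: (1) fix the target problem and state the reduction map $\phi$ from a formula to a Hackenbush position; (2) argue that $\phi$ is computable in polynomial time, which is routine since each gadget has constant size and they are wired together along the clause--variable incidence structure; (3) prove the forward direction, that a satisfying assignment gives a concrete strategy forcing the desired outcome class under misère play; and (4) prove the converse, that any winning strategy must correspond to a consistent satisfying assignment. Throughout I would lean on the misère outcome analysis already established, using grounded-edge counts to pin down the endgame parity once all the ``committing'' green moves have been played.

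The hard part will be step (4), the converse, and more specifically controlling \emph{interaction} between gadgets during play. Because either player may take green edges, the opponent can try to deviate --- taking green edges out of turn, collapsing branches early, or making moves inside a clause gadget rather than in the intended variable gadget --- and I must show none of these deviations helps. The main obstacle is therefore to engineer the gadgets rigidly enough (using isolating green bridges, and red/blue ballast edges that make off-script moves strictly losing under misère) that the game decomposes into independent ``commit a variable'' phases followed by a pure parity endgame, so that optimal play is forced to simulate choosing a truth assignment and the misère outcome faithfully records satisfiability.
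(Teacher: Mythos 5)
Your proposal is a plan rather than a proof, and the plan leaves unresolved exactly the point on which it would stand or fall. You never construct the variable or clause gadgets, and you explicitly defer step (4) --- showing that no deviation (a player grabbing green edges ``out of turn,'' collapsing branches early, etc.) can help. That is not a routine verification you can wave at: under mis\`ere play, sums of games do not decompose the way normal-play games do (there is no analogue of adding game values for disjoint components), so the standard SAT-gadget methodology, which relies on the position splitting into independently analyzable pieces plus a parity count, has no off-the-shelf engine behind it here. Your suggestion to use ``red/blue ballast edges'' in the spirit of Theorem~\ref{hackenbush} also does not transfer: that theorem's grounded-edge count is proved only for Red-Blue positions and fails once green edges are present, which is precisely the setting your gadgets live in. So as written, the proposal contains no verifiable argument --- only a statement of what would need to be engineered.

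It is also worth knowing that the paper's actual proof avoids all of this machinery. Instead of reducing from 3-SAT, it reduces from \emph{normal-play} Red-Blue Hackenbush, which is already known to be NP-hard: given a Red-Blue position $G$, contract the ground to a single vertex and hang the whole position off one grounded green edge to form $G_m$. Under mis\`ere rules, cutting the green edge deletes every remaining edge, so the player who cuts it makes the last move and loses; hence both players avoid it, and each instead tries to make the last move in the Red-Blue part --- i.e., to win it under \emph{normal} play. The mis\`ere outcome of $G_m$ therefore coincides with the normal-play outcome of $G$, and NP-hardness transfers in one step. The single green edge acts as a ``poisoned'' final move that flips the winning convention, which is exactly the leverage your gadget approach was trying to build by hand. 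If you want to salvage your approach, the lesson is to reduce from a game-theoretic problem whose hardness already encodes the alternation structure, rather than rebuilding that structure from a satisfiability problem inside a mis\`ere game.
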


\begin{proof}
To prove this we will a do a transformation from Red-Blue Hackenbush under normal play rules.
First we note two things, as previously stated, it is known that determining the outcome of a general
position of normal play Red-Blue Hackenbush is NP-hard.  It is also known that we can think of the
ground in Hackenbush as being a single vertex, which is drawn as a ground with separate vertices
for clarity in diagrams, \cite{TF}, page 40.  With this in mind we will make our transformation.

The transformation will be as follows, start with a general Red-Blue Hackenbush position $G$.  Next
take the same position and replace the ground, and all the vertices that are on the ground with a single vertex
and call this game $G'$.  Lastly attach $G'$ to a single grounded green edge, and call this game $G_m$.  This process
is illustrated in Figure \ref{transform}.  The figure shows two red and blue edges, this is simply to illustrate the process, however the graph $G$ can be any graph only if the edges are all coloured red or blue.

\begin{figure}[htb]
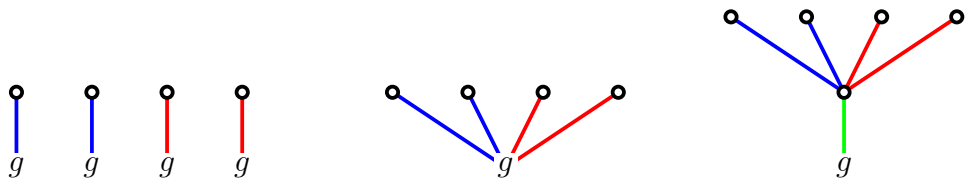

\begin{center}
\begin{graph}(12.5,2)

\graphnodecolour{1}\graphlinewidth{0.05}
\roundnode{1}(0,0)\roundnode{2}(0,1)\roundnode{3}(1,0)\roundnode{4}(1,1)
\roundnode{5}(2,0)\roundnode{6}(2,1)\roundnode{7}(3,0)\roundnode{8}(3,1)
\roundnode{9}(6.5,0)\roundnode{10}(11,0)\roundnode{11}(11,1)\roundnode{12}(7,1)
\roundnode{13}(6,1)\roundnode{14}(5,1)\roundnode{15}(8,1)\roundnode{16}(9.5,2)\roundnode{17}(10.5,2)\roundnode{18}(11.5,2)\roundnode{19}(12.5,2)

\edge{1}{2}[\graphlinecolour(0,0,1)]
\edge{3}{4}[\graphlinecolour(0,0,1)]
\edge{9}{13}[\graphlinecolour(0,0,1)]
\edge{9}{14}[\graphlinecolour(0,0,1)]\edge{11}{16}[\graphlinecolour(0,0,1)]\edge{11}{17}[\graphlinecolour(0,0,1)]

\edge{5}{6}[\graphlinecolour(1,0,0)]
\edge{7}{8}[\graphlinecolour(1,0,0)]
\edge{9}{12}[\graphlinecolour(1,0,0)]
\edge{9}{15}[\graphlinecolour(1,0,0)]
\edge{11}{18}[\graphlinecolour(1,0,0)]\edge{11}{19}[\graphlinecolour(1,0,0)]

\edge{10}{11}[\graphlinecolour(0,1,0)]

\nodetext{1}{$g$}\nodetext{3}{$g$}\nodetext{5}{$g$}\nodetext{7}{$g$}\nodetext{9}{$g$}\nodetext{10}{$g$}

\end{graph}
\end{center}
\caption{Transformation of $G$ to $G_m$.}\label{transform}
\end{figure}

If we are playing $G_m$ under mis\`ere rules, then neither player will want to cut the single green edge, since doing
so will remove every edge in the game, and thus the next player will be unable to move and therefore win under mis\`ere rules.
So both players will want to move last on the graph $G'$ that is attached to the single green edge, thus forcing your opponent
to remove the green edge, which will result in you winning the game.  In other words, whoever wins $G'$ under normal play rules, will
also win $G_m$ under mis\`ere play rules, and since determining the outcome of $G'$ is NP-hard, determing the outcome of $G_m$ is also NP-hard.
So the theorem is proven.
\end{proof}

\begin{problem}  If we restrict Red-Blue-Green mis\`ere Hackenbush
to collections of strings with only grounded green edges is it still
NP-hard to determine the outcome class?  If not what is the
solution?
\end{problem}

\section*{\normalsize Acknowledgements}

I would like to thank my supervisor Keith Edwards for his help and
advice on the theory of NP-completeness and the NP-hard proof.

\vskip 30pt

\end{document}